\newtheorem{theorem}{Theorem}[section]
\newtheorem{question}[theorem]{Question}
\newtheorem{proposition}[theorem]{Proposition}
\renewcommand{\PrintDOI}[1]{\href{http://dx.doi.org/\detokenize{#1}}{doi: \detokenize{#1}}%
  \IfEmptyBibField{pages}{, (to appear in print)}{}}
\def\commutatif{\ar@{}[rd]|{\circlearrowleft}}
\newcommand{\eq}[1][r]
   {\ar@<-3pt>@{-}[#1]
    \ar@<-1pt>@{}[#1]|<{}="gauche"
    \ar@<+0pt>@{}[#1]|-{}="milieu"
    \ar@<+1pt>@{}[#1]|>{}="droite"
    \ar@/^2pt/@{-}"gauche";"milieu"
    \ar@/_2pt/@{-}"milieu";"droite"}
\def\dar[#1]{\ar@<2pt>[#1]\ar@<-2pt>[#1]}
\newcommand{\bigon}[4][r]{% %%%%% bigons
    \ar@/^1pc/[#1]^{#2}_*=<0.3pt>{}="HAUT"
    \ar@/_1pc/[#1]_{#3}^*=<0.3pt>{}="BAS"
    \ar@{=>} "HAUT";"BAS" ^{#4}
  }
\newcommand{\bigons}[6][r]{  %%%%% Vertical composition of bigons
    \ar@/^2pc/[#1]^{#2}_*=<0.3pt>{}="HAUT"
    \ar@{}    [#1]     ^*=<0.3pt>{}="MILIEUHAUT"
                       _*=<0.3pt>{}="MILIEUBAS"
    \ar[#1]_(0.3){#3}
    \ar@/_2pc/[#1]_{#4}^*=<0.3pt>{}="BAS"
    \ar@{=>} "HAUT";"MILIEUHAUT" ^{#5}
    \ar@{=>} "MILIEUBAS";"BAS" ^{#6}
  }
\newcommand\rTo{\longrightarrow}
\newcommand\mto{\longmapsto}
\newtheorem{thm}{Theorem}[section]
\newtheorem{lem}[thm]{Lemma} 
\newtheorem{cor}[thm]{Corollary}
\theoremstyle{definition}
\newtheorem{corollary}[theorem]{Corollary}
\theoremstyle{definition}
\newtheorem{definition}[theorem]{Definition}
\newtheorem{example}[theorem]{Example}
\theoremstyle{remark}
\newtheorem{remark}[theorem]{Remark}
\numberwithin{equation}{section}
\title{Topologies, Posets and Finite Quandles}
\author{Mohamed Elhamdadi}
\address{Department of Mathematics and Statistics,
University of South Florida, Tampa, FL 33620, U.S.A.}
\email{emohamed@math.usf.edu}
\author{Tushar Gona}
\address{Department of Mathematics, 
University of California,
Berkeley, CA 94720}
\email{gonatushar@berkeley.edu}
\author{Hitakshi Lahrani}
\address{Department of Mathematics and Statistics,
	University of South Florida, Tampa, FL 33620, U.S.A.}
\email{lahrani@usf.edu}
\begin{document}
\maketitle

\begin{abstract} An Alexandroff space is a topological space in which every intersection of open sets is open.  There is one to one correspondence between Alexandroff $T_0$-spaces and \emph{partially ordered sets} (posets).  We investigate Alexandroff $T_0$-topologies on finite quandles.  We prove that there is a non-trivial topology on a finite quandle making right multiplications continuous functions if and only if the quandle has more than one orbit.   Furthermore, we show that right continuous posets on quandles with $n$ orbits are $n$-partite. We also find, for the even dihedral quandles, the number of all possible topologies making the right multiplications continuous. 
Some explicit computations for quandles of cardinality up to \emph{five} are given.
\end{abstract}

\section{Introduction}
Quandles are algebraic structures modeled on the \emph{three} Reidemeister moves in classical knot theory. 
 They have been used extensively to construct invariants of knots and links, see for example \cites{EN, Joyce, Matveev}.  A Topological quandle is a quandle with a topology such that the quandle binary operation is compatible with the topology.  Precisely, the binary operation is continous and the right multiplications are \emph{homeomorphisms}.  Topological quandles were introduced in \cite{Rubin} where it was shown that the set of homomorphisms from the fundamental quandle of the knot to a topological quandle (called also the set of colorings) is an invariant of the knot.  Equipped with the compact-open topology, the set of colorings is a topological space.  In \cite{EM} a foundational account about topological quandles was given.  More precisely,  the notions of ideals, kernels, units, and inner automorphism group in the context of topological quandle were introduced.  Furthermore, modules and quandle group bundles over topological quandles were introduced with the purpose of studying central extensions of topological quandles.  Continuous cohomology of topological quandles was introduced in \cite{ESZ} and compared to the algebraic theories.  Extensions of topological quandles were studied with respect to continuous 2-cocycles, and used to show differences in second cohomology groups for some  specific topological quandles.  Nontriviality of continuous cohomology groups for some examples of topological quandles was shown.  In in \cite{CES} the problem of classification of topological Alexander quandle structures, up to isomorphism, on the real line and the unit circle was investigated.   In \cite{Gr} the author investigated quandle objects internal to groups and topological spaces, extending the well-known classification of quandles internal to abelian groups \cite{Szymik}.  In \cite{Tak} quandle modules over quandles endowed with geometric structures were studied.  The author also gave an infinitesimal description of certain modules in the case when the quandle is a regular s-manifold (smooth quandle with certain properties).  Since any finite $T_1$-space is discrete, the category of finite $T_0$-spaces was considered in \cite{Stong}, where the point set topological properties of finite spaces were investigated.  The homeomorphism classification of finite spaces was investigated and some representations of these spaces as certain classes of matrices was obtained.
 
 This article arose from a desire to better understand the analogy of the work given in \cite{Stong} in the context of \emph{finite topological} quandles.  It turned out that: there is no $T_0$-topology on any finite  connected (meaning one orbit under the action of the Inner group) quandle  $X$ that makes $X$ into a topological quandle (Theorem~\ref{noT0}).  Thus we were lead to consider topologies on quandles with more than \emph{one} orbit.  It is well known \cite{Alex} that the category of Alexandroff $T_0$-spaces is equivalent to the category of \emph{partially ordered sets} (posets).  In our context, we prove that for a quandle $X$ with more than one orbit, there exists a unique non trivial topology which makes right multiplications of $X$ continuous maps (Proposition~\ref{Prop}).  Furthermore, we prove that if $X$ be a finite quandle with two orbits $X_1$ and $X_2$ then any continuous poset on $X$ is biparatite with vertex set $X_1$ and $X_2$ (Proposition~\ref{Bipar}).  This article is organized as follows.  In Section~\ref{Review} we review the basics of topological quandles.  Section~\ref{Poset} reviews some basics of posets, graphs and some hierarchy of separation axioms. In Section~\ref{Main} the main results of the article are given. 
 Section~\ref{Computations} gives some explicit computations based on some computer softwares (Maple and Python) of quandles up to order \emph{five}.  

\section{Review of Quandles and Topological Quandles}\label{Review}
A quandle is a set $X$ with a binary operation $*$ satisfying the following three axioms:
\begin{enumerate}
    \item 
    For all $x$ in $X$, $x*x=x,$
    \item
    For all $y,z \in X$, there exists a unique $x$ such that $x*y=z$, 

    \item
    For all $x,y,z \in X$, $(x*y)*z=(x*z)*(y*z)$.
    
\end{enumerate}
These three conditions come from the axiomatization of the three Reidemeister moves on knot diagrams.  The typical examples of quandles are:
(i) Any Group $G$ with conjugation $x*y=y^{-1}xy$, is a quandle called the \emph{conjugation quandle} and (ii) Any group $G$ with operation given by $x*y=yx^{-1}y$, is a quandle called the \emph{core quandle}.\\
Let $X$ be a quandle.  For an element $y \in X$, left multiplication $L_y$ and right multiplication $R_y$ by an element $y$ are the maps from $X$ to $X$ given respectively by $L_y(x):=y*x$ and $R_y(x)=x*y$.  A function $f: (X,*) \rightarrow  (X,*)$ is a quandle {\em homomorphism} if for all $x,y \in X, f(x * y)=f(x) * f(y)$.  If furthermore $f$ is a bijection then it it is called an \emph{automorphism} of the quandke $X$.  We will denote by {\rm Aut(X)} the automorphism group of $X$.   The subgroup of {\rm Aut(X)}, generated by the automorphisms $R_x$, is called the {\em inner} automorphism group of $X$ and denoted by {\rm Inn}$(X)$.  If the group {\rm Inn}$(X)$ acts \emph{transitively} on $X$, we then say that $X$ is connected quandle meaning it has only one orbit.  Since we do not consider topological connectedness in this article, then through the whole article, the word connected quandle will stand for algebraic connectedness.  For more on quandles refer to \cite{EN, Joyce, Matveev, E}.
Topological quandles have been investigated in \cite{CES, EM, Rubin, ESZ}.  Here we review some basics of topological quandles.
\begin{definition}
A \textit{topological quandle} is a quandle $X$ whith a topology such that the map $X\times X\ni (x,y)\mto x * y\in X$ is a continuous, the right multiplication $R_x:X\ni y\mto y* x\in X$ is a homeomorphism, for all $x\in X$, and $x* x=x$. 
\end{definition}
It is clear that any finite quandle is automatically a topological quandle with respect to the discrete topology.

\begin{example} \cite{CES}
 	Let $(G, +)$ be a topological abelian group and let $\sigma$ be a continuous automorphism of $G$.  The continuous binary operation on $G$ given by $x*y=\sigma(x)+(Id-\sigma)(y), \forall x,y \in G,$ makes $(G,*)$ a topological quandle called \textit{topological Alexander quandle}. In particular, if $G=\mathbb{R}$ and $\sigma(x)=tx$ for non-zero $t \in \mathbb{R}$, we have the topological Alexander structure on $\mathbb{R}$ given by $x*y=tx+(1-t)y$.    
\end{example}

\begin{example} The following examples were given in \cites{Rubin,EM}.  The unit sphere $\mathbb{S}^n \subset \mathbb{R}^{n+1}$ with the binary operation $x\ast y=2(x\cdot y)y-x$ is a topological quandle, where $\cdot$ denotes the inner product of $\mathbb{R}^{n+1}$. Now consider $\lambda$ and $\mu$ be real numbers, and let $x,y \in S^n$. Then $$\lambda x * \mu y=\lambda[2{\mu}^2(x\cdot y)y-x].$$ 
  In particular, the operation $$\pm x *  \pm y=\pm (x * y)$$ provides a structure of topological quandle on the quotient space that is the projective space $\mathbb{RP}^n$.
\end{example}

\section{Review of topologies on finite sets, Posets and Graphs}\label{Poset}
Now we review some basics of directed graphs, posets and $T_0$ and $T_1$ topologies. %some hierarchy of separation axioms. 

\begin{definition}
A \emph{directed} graph G is a pair $(V, E)$ where $V$ is the set of vertices and $E$ is a list of
 directed line segments called edges between pairs of vertices.  
\end{definition}
An edge from a vertex $x$ to a vertex $y$ will be denoted symbolically by $x < y$ and we will say that $x$ and $y$ are \emph{adjacent}. 
The following is an example of a directed graph.
\begin{example}
Let $G=(V,E) $ where $V=\{a,b,c,d\}$ and $E=\{b<a,c<a ,a<d \}$.
\end{example}
\begin{center}
\begin{figure}[h!]

\begin{tikzpicture}
[nodes={draw, circle}, <-]
\node{d}
    child { node {a} 
        child { node {b} }
        child { node {c} }  
    }
    child [ missing ];
    
\end{tikzpicture}
\end{figure} 
\end{center}

\begin{definition}
An \emph{idependent set} in a graph is a set of pairwise non-adjacent vertices.
\end{definition}
\begin{definition}
A (directed) graph $G=(V,E)$ is called biparatite if  $V$ is the union of two disjoint independent sets $V_1$ and $V_2$.
%can be split into two disjoint subsets, $V_1$ and $V_2$, 
%in such a way that each edge in the graph joins a vertex in $V_1$ to a vertex in $V_2$.
\end{definition}
\begin{definition}
A (directed) graph $G$ is called \emph{complete  biparatite} if $G$ is bipartite  and for every $v_1 \in V_1$  and $v_2 \in V_2$ there is an edges in $G$ that joins $v_1$ and $v_2$.% then the graph $G$ is called a complete  bipartite graph.
\end{definition}
\begin{example} 
 Let $V=V_1 \cup V_2$ where $V_1=\{4,5\}$ and $V_2=\{1,2,3\}$.  Then the directed graph $G=(V,E)$ is complete biparatite graph.
    \begin{center}

\includegraphics[]{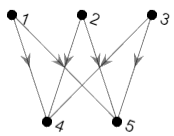}

\end{center}
\end{example}
Now we recall the definition of partially ordered set.
\begin{definition}
A partially ordered set (poset) is a set $X$ with an order denoted $\leq$ that is reflexive, antisymmetric and transitive.
\end{definition}

\begin{example}
For any set $X$, the power set of $X$ ordered by the set inclusion relation $\subseteq$ forms a poset $( \mathcal{P}(X),\subseteq)$
\end{example}
\begin{definition}
Two partially ordered sets $P=(X,\leq)$ and $Q=(X,\leq')$  are said to be isomorphic  if there exist a bijection $f:X \rightarrow X'$ such that $x \leq y$ if and only if   $f(x)\leq'f(y).$
\end{definition}

\begin{definition}\label{ConnPoset}
A poset $(X,\leq )$ is connected if for all $x,y\in X$, there exists sequence of elements $x=x_1,x_2, \ldots, x_n=y $ such that every two consecutive elements $x_i$ and $x_{i+1}$ are comparable (meaning $x_i < x_{i+1}$ or $x_{i+1} < x_i$).
\end{definition}
\noindent
{\bf Notation:} Given an order $\leq$ on a set $X$, we will denote $x<y$ whenever $x \neq y$ and $x \leq y$.  Finite posets $(X,\leq)$ can be drawn as directed graphs where the vertex set is $X$ and an arrow goes from $x$ to $y$ whenever $x \leq y$.  %For the purpose of not encumbering the directed graph, 
For simplicity, we will not draw loops which correspond to $x \leq x$.
We will then use the notation $(X,<)$ instead of $(X,\leq)$ whenever we want to ignore the reflexivity of the partial order.%, particularly in drawing diagrams of posets. 

\begin{example}. Let $X=\mathbb{Z}_8$ be the set of integers modulo $8$.  The map $f:X \rightarrow X$ given by $f(x)=3x-2$ induces an isomorphism between the following two posets $(X,<)$ and $(X,<')$.

  \begin{center}
      
   \includegraphics{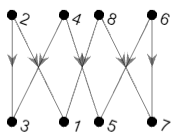},  \qquad    
     \includegraphics{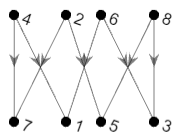} 
   \end{center}
\end{example}
\begin{definition}
A chain in a poset $(X, <)$ is a subset $C$ of $X$ such that the restriction of $<$ to $C$ is a total order (i.e. every two elements are comparable).
\end{definition}

Now we recall some basics about topological spaces called $T_0$ and $T_1$ spaces.
\begin{definition}
	A topological space $X$ is said to have the property $T_0$ if for every pair of distinct points of $X$, at least one of them has a neighborhood not containing the other point.
\end{definition}

\begin{definition}
	A topological space $X$ is said to have the property $T_1$ if for every pair of distinct points of $X$, each point has a neighborhood not containing the other point.
\end{definition}
Obviously the property $T_1$ implies the property $T_0$.  Notice also that this definition is equivalent to saying singletons are closed in $X$.  Thus a $T_1$-topology on a \emph{finite} set is a discrete topology.

%The number of $T_0$-inequivalent topologies on sets of cardinalities $2,3,4,5, 6, 7$ and $8$ are respectively $2, 5, 16, 63, 318,2045, $ and $16999$.
Since any finite $T_1$-space is discrete, we will focus on the category of finite $T_0$-spaces.  First we need some notations. 

Let $X$ be a finite topological space.  For any $x \in X$, we denote 
\[
U_x:=\textit{the smallest open subset of $X$ containing $x$}
\]

It is well known \cite{Alex} that the category of $T_0$-spaces is isomorphic to the category of posets.  We have $x \leq y$ if and only if $U_y \subseteq U_x$ which is equivalent to $C_x \subset C_y$, where $C_v$ is the complelement $U_v^{c}$ of $U_v$ in $X$.  Thus one obtain that $U_x=\{w \in X;\; x \leq w\}$ and $C_x=\{v \in X; \;v < x\}$.  Under this correspondence of categories, the subcategory of finite posets
is equivalent to the category of finite $T_0$-spaces. \\
Through the rest of this article we will use the notation of $x<y$ in the poset whenever $x\neq y$ and $x \leq y$.

%The following table gives $T_0$-inequivalent topologies on finite sets of cardinality $n$, where $1 \leq n \leq 6$.

%$$\begin{array}{|c|c| c|} 
%\hline
%n \ &     T_0-topologies   \\ \hline
%\ 1 &  1  \\ \hline
%\ 2 &   2   \\ \hline
%\ 3 &   5   \\ \hline
%\ 4 &   16   \\ \hline
%\ 5 &   63  \\ \hline
%\ 6 &  	  318  \\ \hline
%\end{array}
%

\begin{comment}
$$\begin{array}{|c|c| c| c| c| c|} 
\hline
n \ & Distinct\\& Topologies & Distinct\\&& T_0-Topologies & inequivalent\\&&& topologies & T_0-inequivalent\\&&&& topologies   \\ \hline
\ 1 & 1 &1 & 1 & 1  \\ \hline
\ 2 &  4 & 3 & 3 & 2   \\ \hline
\ 3 &  29 & 19 & 9 & 5   \\ \hline
\ 4 &  355 & 219 & 33 & 16   \\ \hline
\ 5 &  6942 & 4231 & 139 & 63  \\ \hline
\ 6 &  209527 & 130023 & 718	 & 318  \\ \hline
\end{array}
$$
\end{comment}

\section{Topologies on non- connected Quandles}\label{Main}

As we mentioned earlier, since $T_1$-topologies on a finite set are discrete, we will focus in this article on $T_0$-topologies on \emph{finite quandles}.  A map on finite spaces is continuous if and only if it preserves the order.  It turned out that on a finite quandle with a $T_0$-topology, left multiplications can not be continuous as can be seen in the following theorem

\begin{theorem}\label{left}
Let $X$ be a finite quandle endowed with a $T_0$-topology.  Assume that for all $z \in X$, the map $L_z$ is continuous, then $x \leq y$ implies $L_z(x)=L_z(y)$.

\end{theorem}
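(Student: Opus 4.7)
The plan is as follows. As noted in the paragraph preceding the statement, for finite $T_0$-spaces continuity is equivalent to order preservation, so the hypothesis translates into the statement that for every $z \in X$,
\begin{equation*}
x \leq y \ \Longrightarrow\ z*x \leq z*y.
\end{equation*}
Rewriting $z*x = R_x(z)$ and $z*y = R_y(z)$, this reads $R_x(z) \leq R_y(z)$ for every $z \in X$; in other words the two right-multiplications $R_x, R_y \colon X \to X$ are pointwise comparable in the poset.

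The main tool is then quandle axiom (2), which guarantees that each $R_w$ is a bijection of the finite set $X$. Define $\pi := R_y \circ R_x^{-1}$; this is a bijection of $X$, and the pointwise inequality from the previous step becomes $\pi(w) \geq w$ for every $w \in X$.

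The heart of the argument is the following small order-theoretic lemma, which I would establish next: a bijection $\pi$ of a finite poset satisfying $\pi(w) \geq w$ for all $w$ must be the identity. I would prove this by contradiction. If $\pi(w_0) > w_0$ strictly for some $w_0$, then an easy induction using $\pi(u) \geq u$ with $u = \pi^{n-1}(w_0)$ gives $\pi^n(w_0) \geq \pi(w_0) > w_0$ for every $n \geq 1$. But $\pi$ is a bijection of a finite set and therefore has finite order, so $\pi^N(w_0) = w_0$ for some $N \geq 1$, producing the strict inequality $w_0 > w_0$.

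Applying the lemma yields $\pi = \mathrm{id}$, hence $R_x = R_y$, and evaluating at any $z$ gives $L_z(x) = z*x = z*y = L_z(y)$, which is exactly the desired conclusion. The only step demanding care is the finite-poset lemma; the rest is a direct translation through the quandle axioms and the $T_0$/poset dictionary already recorded in the paper.
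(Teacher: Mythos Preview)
Your proof is correct. It is essentially a cleaner repackaging of the paper's argument rather than a fundamentally different idea, but the presentation is genuinely more transparent. The paper assumes some $a_1$ with $a_1*x < a_1*y$, then inductively manufactures $a_2,a_3,\ldots$ via the rule $a_{i+1}*x = a_i*y$ (using bijectivity of $R_x$) and argues at each step that the strict inequality persists, obtaining an infinite strictly increasing chain $a_1*x < a_2*x < \cdots$ in the finite set $X$. If you unwind this, the chain is exactly the $\pi$-orbit of $w_1 = a_1*x$ under your map $\pi = R_y\circ R_x^{-1}$, since $a_{i+1}*x = a_i*y = R_y(R_x^{-1}(a_i*x))$. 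So both proofs exploit the same mechanism; you simply isolate it as the clean order-theoretic lemma ``a bijection of a finite poset that dominates the identity pointwise is the identity'' and prove it via the finite order of $\pi$, whereas the paper carries out the chain construction by hand. Your route avoids the somewhat delicate case analysis the paper uses to show each $a_{i+1}$ is new and each inequality stays strict, and it makes clear that the conclusion is really $R_x = R_y$ (equivalently $L_z(x)=L_z(y)$ for every $z$ simultaneously).
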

\begin{proof} We prove this theorem by contradiction.  Let $X$ be a finite quandle endowed with a $T_0$-topology.  Assume that $x \leq y$ and $L_z(x) \neq L_z(y)$.  If $x=y$, then obviously $L_z(x)=L_z(y)$.  Now assume $x <y$, then for all $a \in X$, the continuity of $L_a$ implies that $a*x \leq a*y$.  Assume that there exist $a_1 \in X$ such that, $z_1:=a_1*x=L_{a_1}(x) < a_1*y=L_{a_1}(y)$.
The invertibility of right multiplications in a quandle implies that there exist unique $a_2$ such that 
$a_2*x=a_1*y$ hence 
$a_1*x<a_2*x$ which implies $a_1 \neq a_2.$  Now we have $a_1*x<a_2*x \leq a_2 *y=z_2$. 
We claim that $a_2*x < a_2 *y$. 
if $a_2*y=a_2*x$ and since $a_2*x=a_1*y$ we will have
$a_2*y=a_2*x=a_1*y$ hence $a_2*y=a_1*y$ but $a_1 \neq a_2$, thus contradiction.
Now that we have proved $a_2*x < a_2 *y$, then there exists $a_3$ such that $a_2*y=a_3*x$  we get, $a_2*x<a_3*x$ repeating the above argument we get, $a_3*x<a_3*y$.  Notice that $a_1,a_2$ and $a_3$ are all pairwise disjoint elements of $X$.
Similarly, we construct an \emph{infinite} chain, $a_1 * x< a_2*x <a_3 *x < \cdots $, which is impossible since $X$ is a finite quandle.  Thus we obtain a contradiction.
  \end{proof}
 % {\color{red} Stoped here on Thursday sep 22}.\\
\begin{comment}
Theorem~\ref{left} states that we can not have a notion of \emph{semitopological quandles} (continuity with respect to each variable considered separately) on finite quandles.  For more on semitopological groups see \cite{Husain}.  Thus through the rest of the paper, the word continuity will mean continuity of \emph{right multiplications} in the finite quandle.  We will then say that the quandle with its topology is \emph{right continuous}.  We borrow the following terminologies from the book \cite{Rup}.
\end{comment}
We have the following Corollary
\begin{corollary}
Let $X$ be a finite quandle endowed with a $T_0$-topology.
 If $C$ is a chain of $X$ as a poset then any left continuous function $L_x$ on $X$ is a constant function on $C$.
\end{corollary}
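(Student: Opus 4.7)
The plan is to reduce the corollary to a direct, finitely many invocations of Theorem~\ref{left} strung together by transitivity of equality. Since $X$ is finite, the chain $C$ is a finite totally ordered subset of the poset $(X,\leq)$, so I can enumerate its elements in increasing order as $c_1 < c_2 < \cdots < c_n$.

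Next, I would fix any $L_x$ satisfying the hypothesis of Theorem~\ref{left} (i.e., all left multiplications in $X$ are continuous, which is what "left continuous" is meant to encode). For each consecutive pair $c_i \leq c_{i+1}$ in the chain, Theorem~\ref{left} immediately gives
\[
L_x(c_i) \;=\; L_x(c_{i+1}).
\]
Applying this identity for $i = 1, 2, \ldots, n-1$ and using the transitivity of equality, I conclude
\[
L_x(c_1) = L_x(c_2) = \cdots = L_x(c_n),
\]
which is exactly the assertion that $L_x$ is constant on $C$.

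There is no real obstacle here: the corollary is essentially a reformulation of Theorem~\ref{left} restricted to chains, and the only things used beyond that theorem are the definition of a chain (total order) and finiteness of $X$ (so that the chain admits a well-defined enumeration). The single point worth being explicit about is that "left continuous function $L_x$" tacitly presumes the standing hypothesis of Theorem~\ref{left}, namely continuity of every $L_z$, since the theorem's argument used that hypothesis through its iterative construction of the $a_i$'s.
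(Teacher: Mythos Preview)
Your proposal is correct and matches the paper's intended argument: the paper states the corollary immediately after Theorem~\ref{left} without proof, precisely because it is the direct chain-by-chain application you describe. Your remark that the phrase ``left continuous'' must be read as the standing hypothesis that \emph{all} $L_z$ are continuous (since Theorem~\ref{left} needs this for its iterative construction) is exactly the right caveat.
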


\begin{definition}
    A quandle with a topology in which right multiplications (respectively left multiplications) are continuous is called \emph{right topological quandle} (respectively \emph{left topological quandle}).
\end{definition}
%Since we start with a quandle $X$ before adding a topology to it, then
In other words, right topological quandle means that for all $x,y,z \in X$, 
\[
x<y \implies x*z < y*z.
\]
%But since left multiplications are not necessarly bijective maps, 

and, since left multiplications are not necessarly bijective maps, left topological quandle means that for all $x,y,z \in X$, 
\[
x<y \implies z*x \leq z*y.
\]

\begin{comment}
\begin{definition}
    A quandle with a topology in which both right multiplications and left multiplications are continuous is called \emph{semitopological quandle}.
\end{definition}

Obviously topological quandle implies semitopological quandle which implies 
right topological quandles, but the converses are not true.  
\end{comment}

\begin{theorem}\label{noT0}
There is no $T_0$-topology on a finite  connected quandle $X$ that makes $X$ into a right topological quandle. 
\end{theorem}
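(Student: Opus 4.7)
The plan is to reformulate the hypothesis via the Alexandroff correspondence between finite $T_0$-spaces and finite posets, and then use the transitivity of $\mathrm{Inn}(X)$ together with finiteness to force the corresponding poset to be an antichain (so the topology is just the discrete one).

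First, I would show that every element of the inner automorphism group is a strict order automorphism of $(X, \leq)$. By hypothesis each right multiplication $R_z$ strictly preserves the strict order: $x < y \Rightarrow R_z(x) < R_z(y)$. Being a bijection of the finite set $X$, the permutation $R_z$ has some finite order $N \geq 1$ in $\mathrm{Sym}(X)$, so $R_z^{-1} = R_z^{N-1}$ is itself a composition of strict-order-preserving maps and therefore strictly preserves $<$. Since $\mathrm{Inn}(X)$ is generated as a group by the $R_z$'s, every $\phi \in \mathrm{Inn}(X)$ is a strict order automorphism of $(X, \leq)$.

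Next, I would argue by contradiction. Suppose the poset is not an antichain, so that there exist $x, y \in X$ with $x < y$, and pick a maximal element $m \in X$, which exists since $X$ is finite. Connectedness of $X$ gives some $\phi \in \mathrm{Inn}(X)$ with $\phi(x) = m$. Applying the previous step, $m = \phi(x) < \phi(y)$, which contradicts the maximality of $m$. Hence $(X, \leq)$ must be an antichain and the associated Alexandroff topology is discrete, giving the claim.

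The only mildly delicate point is verifying that $R_z^{-1}$ also preserves the order, which I handle by the standard finite-order trick above; once that is in hand, the rest is a direct two-line contradiction obtained by transporting a strict comparison to a maximal element.
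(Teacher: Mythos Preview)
Your proof is correct and rests on the same core observation as the paper's: every element of $\mathrm{Inn}(X)$ acts as a strict order automorphism of the associated poset, and transitivity then rules out any nontrivial comparison. You are actually more careful than the paper on the one delicate point, namely that $R_z^{-1}$ is order-preserving; the paper simply asserts that the relevant $\phi\in\mathrm{Inn}(X)$ is ``a continuous automorphism'' without spelling out the finite-order trick you use.

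Where you diverge is in the contradiction itself. The paper picks $\phi\in\mathrm{Inn}(X)$ with $\phi(x)=y$, so that $x<\phi(x)$, and then iterates to obtain $x<\phi^m(x)=x$ where $m$ is the order of $\phi$. You instead transport the pair $x<y$ to a maximal element via some $\phi$ with $\phi(x)=m$, obtaining $m<\phi(y)$. Both arguments are short and equivalent in strength; the paper's iteration avoids invoking the existence of a maximal element, while your maximal-element argument avoids tracking the order of $\phi$. Either way the conclusion is that the poset is an antichain, i.e., the only compatible $T_0$-topology is the discrete one.
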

\begin{proof}
   Let $x <y$.  Since $X$ is connected quandle, there exists $\phi \in Inn(X)$ such that $y=\phi(x)$.  Since $X$ is finite, $\phi$ has a finite order $m$ in the group $Inn(X)$.  Since $\phi$ is a continous automorphism then $x<\phi(x)$ implies $x<\phi^m(x)$ giving a contradiction.
\end{proof}
\begin{corollary}\label{noodd}
There is no $T_0$-topology on any latin quandle that makes it into a right topological quandle. 
\end{corollary}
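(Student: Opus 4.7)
The plan is to derive Corollary~\ref{noodd} directly from Theorem~\ref{noT0} by showing that every latin quandle is algebraically connected (in the paper's sense, i.e.\ $\operatorname{Inn}(X)$ acts transitively). Once that reduction is in place, no new topological argument is needed: Theorem~\ref{noT0} already rules out any $T_0$-topology making the right multiplications continuous on a finite connected quandle.

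So the key step is the purely algebraic claim: a latin quandle is connected. Recall that latin means every left multiplication $L_x \colon y \mapsto x*y$ is a bijection (right multiplications are automatically bijections by quandle axiom~(2)). Given arbitrary $x,y \in X$, bijectivity of $L_x$ yields a unique $z \in X$ with $x*z = y$, which rewrites as $R_z(x) = y$. Since $R_z \in \operatorname{Inn}(X)$, this exhibits an inner automorphism carrying $x$ to $y$, so $\operatorname{Inn}(X)$ acts transitively and $X$ is connected.

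With that observation, the corollary is immediate: if a finite latin quandle $X$ admitted a $T_0$-topology making it a right topological quandle, then $X$ would be a finite connected quandle carrying such a topology, contradicting Theorem~\ref{noT0}.

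I do not expect any serious obstacle here; the only subtle point is making sure the reader sees that ``latin'' gives the surjectivity of $L_x$ needed to find the required $R_z$, so I would state the bijectivity of left multiplications explicitly before invoking it. Everything else is a one-line appeal to Theorem~\ref{noT0}.
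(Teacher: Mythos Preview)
Your argument is correct and is exactly the intended one: the paper gives no explicit proof for this corollary, simply recording it immediately after Theorem~\ref{noT0}, so the implicit step is precisely ``latin $\Rightarrow$ connected $\Rightarrow$ apply Theorem~\ref{noT0},'' which is what you spell out. The only point worth making explicit is that the statement should be read for \emph{finite} latin quandles, since Theorem~\ref{noT0} is stated for finite $X$; your write-up already handles this in the final paragraph.
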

\begin{comment}
\begin{proof}
For Dihedral quandle $R_y(x)=2y-x$\\
If n is odd then 2 is invertible in $Z_n$, hence for every\\
$z \in \mathbb(z)_n$ there exist $y=\frac{z+x}{2}$ such that $R_y(x)=z$
hence $R_y(x)$ is both injective in $x$ and $y$ hence quandle will have only one orbit.\\
Hence it is connected.
\end{proof}
\end{comment}

Thus Theorem~\ref{noT0} leads us to consider quandles $X$ that are not  connected, that is $X=X_1 \cup X_2\cup \ldots X_k$ as orbit decomposition, search for $T_0$-topology on $X$ and investigate the continuity of the binary operation.  

%For simplicity assume $X=X_1 \cup X_2$ as orbit decomposition.  Let $x<y$ then $x$ and $y$ belong to disjoint orbits since orbits are connected.  Thus assume $x \in X_1$ and $y \in X_2$.

%The following proposition appeared in \cite{BPS} but we state it in slightly different form for our purpose. 
\begin{proposition}\label{Prop}
Let $X$ be a finite quandle with orbit decomposition $X=X_1 \cup \{a\}$, then there exist unique non trivial $T_0$-topology which makes $X$ right continuous. 
\end{proposition}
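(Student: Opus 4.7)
The plan is to classify all right-continuous $T_0$-posets on $X$: first kill any strict comparisons inside the orbit $X_1$, then show that as soon as $a$ is comparable with one element of $X_1$ it must be comparable with every such element in a single consistent direction, and finally verify the resulting candidate poset actually gives a right topological quandle.

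First I would rule out $x < x'$ for $x, x' \in X_1$ by repeating the argument of Theorem~\ref{noT0} inside the orbit $X_1$. Since $X_1$ is an $\mathrm{Inn}(X)$-orbit, there is $\phi \in \mathrm{Inn}(X)$ with $\phi(x) = x'$. Every $R_z$ is continuous and has finite order $m_z$ on $X$, so $R_z^{-1} = R_z^{m_z - 1}$ is again a composition of continuous maps; hence every $\phi \in \mathrm{Inn}(X)$ is a composition of $R_z$'s and is therefore continuous and order-preserving. Iterating $x < \phi(x) < \phi^2(x) < \cdots$ under the finite-order map $\phi$ cycles back to $x$, a contradiction.

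Second, suppose $a < y$ for some $y \in X_1$. Because $\{a\}$ is a singleton orbit, every $\phi \in \mathrm{Inn}(X)$ fixes $a$, and because $\phi$ is continuous the relation $a < y$ yields $a = \phi(a) \leq \phi(y)$; since $\phi(y) \in X_1$ is distinct from $a$, this is in fact strict, $a < \phi(y)$. As $\phi$ sweeps out all of $X_1$, we conclude that $a$ lies strictly below every element of $X_1$. The symmetric argument shows that if $y < a$ for some $y \in X_1$ then $a$ is strictly above all of $X_1$. The two directions cannot coexist: $a < y$ together with $y' < a$ would give $y' < y$ in $X_1$, contradicting the first step.

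Thus the only non-trivial right-continuous $T_0$-poset on $X$, up to reversing all inequalities, is the one in which $X_1$ is an antichain and $a$ is a sole extremum. I would then verify directly that this poset does make $X$ right continuous: every $R_z$ fixes $a$ and permutes $X_1$, so the bipartite comparisons between $a$ and $X_1$ are preserved trivially. The main obstacle is the content of the second step, specifically that continuity of the $R_z$'s combined with the singleton-orbit property of $\{a\}$ pins $a$ to be simultaneously comparable with every point of $X_1$; everything else is clean orbit and finite-order bookkeeping in the style of Theorem~\ref{noT0}.
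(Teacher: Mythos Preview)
Your proposal is correct and uses the same core mechanism as the paper: the transitivity of $\mathrm{Inn}(X)$ on $X_1$ together with the fact that every $\phi\in\mathrm{Inn}(X)$ fixes $a$ propagates any single comparison with $a$ to all of $X_1$. Your argument is in fact more complete than the paper's, which only constructs the order $x<a$ for $x\in X_1$ and then declares ``Uniqueness is obvious''; you supply that uniqueness by first ruling out comparisons inside $X_1$ (reusing the Theorem~\ref{noT0} argument) and then excluding mixed directions. Your caveat ``up to reversing all inequalities'' is also on point: the dual poset with $a$ as the minimum yields a second, genuinely distinct right-continuous $T_0$-topology on $X$, a subtlety the paper's statement and proof do not address.
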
 

\begin{proof}
Let $X=X_1 \cup \{a\}$ be the orbit decomposition of the quandle $X$.  For any $x,y \in X_1$, there exits $\phi \in Inn(X)$ such that $\phi(x)=y$ and $\phi(a)=a$.  Declare that $x<a$, then $\phi(x)<a$.  Thus for any $z\in X_1$ we have $z<a$. Uniqueness is obvious.  
\end{proof}

%\begin{proof}
 % Let $X=X_1 \cup X_2$ where $X_1=\{1,2...n-1 \}$ and $X_2 =\{n\}$.
 % For every $i,j \in X_1$, there exist $\phi \in Inn(X)$ such that $\phi(i)=j$ and $\phi(n)=n$.
 %finite series $k_1, k_2...k_t$ such that \\
 %$((((i*k_1)*k_2)*...k_t)=j$\\
 %If $1<n$ then $\phi(1)<n$, giving $i <n$ for each  $1\leq i \leq n-1$.% which forms unique poset. 
%\end{proof}
The $T_0$-topology in Proposition~\ref{Prop} is precisely given by $x<a$ for all $x \in X_1.$
%\end{proposition}
%Since the odd dihedral quandle is connected, then by corollary \ref{noodd} we will consider dihedral quandle of even order.

%\newpage

\begin{proposition} \label{Bipar}
 Let $X$ be a finite quandle with two orbits $X_1$ and $X_2$.  Then any right continuous poset on $X$ is biparatite with vertex set $X_1$ and $X_2$. 
\end{proposition}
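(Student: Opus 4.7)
The plan is to establish that no two elements in a common orbit can be comparable in the poset; this is precisely the statement that every edge of the poset (viewed as a directed graph) joins a vertex of $X_1$ to a vertex of $X_2$, i.e.\ the poset is bipartite with parts $X_1$ and $X_2$.

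First I would recycle the argument behind Theorem~\ref{noT0}, but apply it orbit by orbit rather than to the whole quandle at once. Suppose for contradiction that there exist elements $x,y$ lying in a common orbit $X_i$ with $x<y$. Since $X_i$ is an $\mathrm{Inn}(X)$-orbit, there exists $\phi\in \mathrm{Inn}(X)$ with $\phi(x)=y$. The observation that makes the whole argument run is that such a $\phi$ is automatically order-preserving: it is a product of right multiplications $R_a$, each of which is order-preserving by the right-continuity hypothesis, and on the finite set $X$ every $R_a$ has finite order, so any $R_a^{-1}$ appearing in a word for $\phi$ can be rewritten as a positive power of $R_a$ and is therefore also order-preserving.

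Next I would iterate $\phi$. Let $m$ be the (finite) order of $\phi$ in $\mathrm{Inn}(X)$. Applying $\phi$ repeatedly to $x<\phi(x)$, and using that $\phi$ is a bijection (so $\phi^{k}(x)\neq \phi^{k+1}(x)$, which combined with $\phi^{k}(x)\leq \phi^{k+1}(x)$ forces strict inequality), I would obtain the chain
\[
x < \phi(x) < \phi^2(x) < \cdots < \phi^m(x) = x,
\]
contradicting antisymmetry. Running the argument for both $i=1$ and $i=2$ shows that $X_1$ and $X_2$ are independent sets in the Hasse graph, which is exactly the bipartiteness claim. There is no serious obstacle; the content is essentially that the proof of Theorem~\ref{noT0} never really used connectedness of $X$, only that the two would-be-comparable elements lie in a common $\mathrm{Inn}(X)$-orbit, and both uses of finiteness of $X$ (turning $R_a^{-1}$ into a positive power of $R_a$, and guaranteeing $\phi^m=\mathrm{id}$ so the chain closes) are inherited from there.
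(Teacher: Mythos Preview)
Your proposal is correct and follows essentially the same approach as the paper: assume two elements of the same orbit are comparable, pick $\phi\in\mathrm{Inn}(X)$ sending one to the other, and iterate $\phi$ to its finite order to obtain a contradiction. Your version is in fact more careful than the paper's, since you explicitly justify why $\phi$ (and in particular any $R_a^{-1}$) is order-preserving, a point the paper leaves implicit.
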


\begin{proof}
We prove this proposition by contradiction.  For every $x_1, y_1 \in X_1$ such that $x_1 <y_1$. We know that there exist $\phi \in Inn(X)$ such that $\phi(x_1)=y_1$. Hence, $x_1< \phi(x_1)$ implies $x_1 <\phi^m(x_1)=x_1$, where $m$ is the order of $\phi$ in $Inn(X)$.  Thus we have a contradiction.

\end{proof}
\begin{proposition}\label{biComp}
Let $X$ be a finite quandle with two orbits $X_1$ and $X_2$.  Then the complete bipartite graph with vertex set $X_1$ and $X_2$ forms a right continuous poset.
\end{proposition}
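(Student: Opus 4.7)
The plan is to exhibit the complete bipartite order explicitly and verify the two things one needs: that it is a legitimate poset, and that every right multiplication $R_z$ is order-preserving.

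First I would define the order on $X = X_1 \cup X_2$ by declaring $x \leq x$ for every $x \in X$ together with $x_1 < x_2$ for all $x_1 \in X_1$ and all $x_2 \in X_2$ (a symmetric argument works with the roles of $X_1$ and $X_2$ swapped, giving the other orientation of the complete bipartite graph). Reflexivity is built in, antisymmetry is immediate because a strict relation can only go from $X_1$ to $X_2$, and transitivity is vacuous: any strict chain has length exactly two, so no composition $a < b < c$ can arise (if $a < b$ then $a \in X_1$ and $b \in X_2$, and then $b < c$ would force $b \in X_1$, a contradiction). Hence $(X,\leq)$ is a poset and, by construction, its Hasse diagram is the complete bipartite graph on $(X_1,X_2)$.

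Next I would check right continuity, i.e.\ that $x < y \Rightarrow x*z < y*z$ for every $z \in X$. The key observation is that $R_z \in \mathrm{Inn}(X)$, and since the orbits of $\mathrm{Inn}(X)$ partition $X$ and are $\mathrm{Inn}(X)$-invariant, every inner automorphism maps each orbit $X_i$ into itself. Thus, given $x < y$ with $x \in X_1$ and $y \in X_2$, we have $x*z = R_z(x) \in X_1$ and $y*z = R_z(y) \in X_2$, and the defining rule of the order yields $x*z < y*z$. Hence $R_z$ is order-preserving, which by the Alexandroff correspondence is exactly the statement that $R_z$ is continuous for the associated $T_0$-topology.

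I do not expect a genuine obstacle here: the only substantive point is the orbit-invariance of $R_z$ under $\mathrm{Inn}(X)$, and everything else is immediate from the two-level structure of the poset. The proof is thus essentially a one-line verification once the order is written down.
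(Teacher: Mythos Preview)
Your proposal is correct and follows essentially the same approach as the paper: both arguments rest on the single observation that each $R_z\in\mathrm{Inn}(X)$ preserves the orbits $X_1$ and $X_2$, so the complete bipartite order $X_1<X_2$ is carried to itself by every right multiplication. Your write-up is in fact more complete, since you explicitly verify the poset axioms, whereas the paper treats that as evident.
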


\begin{proof} Let $X$ be a finite quandle with two orbits $X_1$ and $X_2$.
If $x \in X_1$ and $y \in X_2$ then for every $\phi \in Inn(X)$ we have  $\phi(x) \in X_1$ and $\phi(y) \in X_2$.  Proposition~\ref{Bipar} gives that the graph is bipartite and thus $x<y$.  We then obtain $\phi(x) < \phi(y)$ giving the result. % Thus the poset operation $<$ is invariant on the right by the quandle multiplication.%a right continuous poset.
\end{proof}

\begin{remark}
By Proposition~\ref{biComp} and Theorem~\ref{left}, there is a non-trivial $T_0$-topology making $X$ right continuous if and only if the quandle has more than one orbit. 
\end{remark}

Notice that Proposition~\ref{biComp} can be generalized to $n$-paratite  complete graph.

%In generality, if  $(X,*) $ be a finite  quandle with $n$ orbits $X_1$, $X_2, \ldots, X_n$.  % on $Z_{N}$    Then $n$-paratite  complete graph with vertex sets $X_1$,$X_2, \dots, X_n$ forms a continuous poset.

%\newpage

%\includegraphics[]{dihedral6.png}

The following table gives the list of right continuous posets on some even dihedral quandles.  In the table, the notation $(a,b)$ on the right column means $a<b.$
\begin{table}[H]
\caption{Right continuous posets on dihedral quandles}\label{TTable1}
%Note: Dihedral Quandle when n is odd has unique orbit which makes it connected quandle and by proposition it has only trivial poset.\\
%Lets study Dihedral quandle where n is even.\\
%\newpage
\begin{center}
\begin{tabular}{|c|c|} 
    \hline
        Quandle  & 
        Posets \\
        
    \hline
        $R_4$   & 
        ((0,1),(2,1),(0,3),(2,3)) \\
        \hline
        $R_6$ & ((0, 1), (0, 5), (2, 1), (2, 3), (4, 3), (4, 5))\;; \\
  & ((0,3), (2, 5), (4, 1)). \\
  
 \hline
 $R_8$ & ((2, 7), (4, 7), (6, 1), (6, 3), (0, 5), (2, 5), (4, 1), (0, 3))\;;\\
  & (( 0, 1), (6, 7), (4, 5), (0, 7), (2, 1), (2, 3), (4, 3), (6, 5)).  \\ \hline
  $R_{10}$ & ((0, 1), (6, 7), (4, 5), (2, 1), (8, 9), (2, 3), (4, 3), (8, 7), (0, 9), (6, 5)) \;;\\

 &  ((4, 7), (6, 9), (2, 9), (8, 1), (8, 5), (0, 7), (6, 3), (2, 5), (4, 1), (0, 3))\;;\\

&  ((2, 7), (8, 3), (0, 5), (4, 9), (6, 1)).\\
\hline
 \end{tabular}       
 \end{center}
  \end{table}    
 
Notice that in table~\ref{TTable1}, the dihedral quandle $R_4$ has only one right continuous poset $((0,1),(2,1),(0,3),(2,3))$ which is complete biparatite.  While the dihedral quandle $R_6$ has two  continuous posets $((0, 1), (0, 5), (2, 1), (2, 3), (4, 3), (4, 5))$ and $((0,3), (2, 5), (4, 1))$ illustrated below.

     \begin{center}   
  \begin{tikzpicture}
    [scale=.8,auto=left,every node/.style={circle,fill=gray}]
  
  \node (n6) at (2,10) {3};
  \node (n4) at (2,7)  {0};
  \node (n5) at (3,10)  {5};
  \node (n1) at (3,7) {2};
  \node (n2) at (4,10)  {1};
  \node (n3) at (4,7)  {4};

  \foreach \from/\to in {n6/n4,n5/n1,n2/n3}
    \draw (\from) -> (\to);

\end{tikzpicture}
\qquad \qquad \qquad
\begin{tikzpicture}
    [scale=.8,auto=left,every node/.style={circle,fill=gray}]
  
  \node (n6) at (14,10) {3};
  \node (n1) at (13,7)  {2};
  \node (n5) at (13,10)  {5};
  \node (n4) at (14,7) {0};
  \node (n2) at (15,10)  {1};
  \node (n3) at (15,7)  {4};

  \foreach \from/\to in {n4/n2,n2/n1, n5/n3,n6/n1,n5/n3,n4/n5, n3/n6}
    \draw (\from) -- (\to);

\end{tikzpicture}
\end{center}
Moreover, in table~\ref{TTable1}, for $R_8$ the bijection $f$ given by  $f(k)=3k-2$ makes the two posets isomorphic.  The same bijection gives isomorphism between the first two posets of $R_{10}$.
%One can check that this poset is right continous on $Q$ and is not complete bipartite.
 The following Theorem characterizes non complete biparatite posets on dihedral quandles.
 \begin{theorem}
  Let $R_{2n}$ be a dihedral quandle of even order.  Then $R_{2n}$ has $s+1$ right continuous posets, where $s$ is number of odd natural numbers less than n and relatively non coprime with $n$
 \end{theorem}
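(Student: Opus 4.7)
The plan is to classify right continuous posets on $R_{2n}$ by counting orbits of the inner automorphism group $\mathrm{Inn}(R_{2n})$ on ordered bipartite pairs, and then matching the resulting count with the arithmetic quantity $s+1$.

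By Proposition~\ref{Bipar}, every right continuous poset on $R_{2n}$ is bipartite with parts the two $\mathrm{Inn}$-orbits $X_1=\{0,2,\ldots,2n-2\}$ and $X_2=\{1,3,\ldots,2n-1\}$. First I would show that in any such poset all comparisons point in a single direction, say $X_1<X_2$. If both an edge $a<b$ with $(a,b)\in X_1\times X_2$ and an edge $c<d$ with $(c,d)\in X_2\times X_1$ existed, then the shift $b-c$ is even (as $b,c$ are both odd), so the translation $x\mapsto x+(b-c)$ lies in $\mathrm{Inn}(R_{2n})$ as a product of two reflections; applying it to the second edge yields $b<d'$ with $d'=d+(b-c)\in X_1$, and transitivity gives $a<d'$ between two elements of $X_1$, contradicting Proposition~\ref{Bipar}.

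Next I would parametrize each edge $(a,b)\in X_1\times X_2$ by the odd residue $\delta=a-b\pmod{2n}$. The reflections $R_k(x)=2k-x$ preserve parity and send $a-b$ to $-(a-b)$, while products of two reflections are even translations that preserve $a-b$; hence the $\mathrm{Inn}$-orbits on directed bipartite edges correspond bijectively to the equivalence classes of odd residues modulo $2n$ under the involution $\delta\leftrightarrow-\delta$. A single such orbit already forms a right continuous poset: the one-directional bipartite structure makes antisymmetry (no two edges are reverses of each other) and transitivity (no length-two chain is possible) automatic. These single-orbit posets are precisely the minimal right continuous posets enumerated in Table~\ref{TTable1}, and the theorem's count $s+1$ refers to these.

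The count then reduces to enumerating the orbits of $\delta\leftrightarrow-\delta$ on odd residues modulo $2n$. A fixed point satisfies $2\delta\equiv 0\pmod{2n}$, i.e.\ $\delta\equiv n\pmod{2n}$; for odd $\delta$ this occurs precisely when $n$ is itself odd, yielding a single exceptional self-paired orbit represented by $\delta=n$. The remaining odd residues split into two-element orbits $\{\delta,-\delta\}$, each uniquely represented by an odd integer $\delta$ with $1\le\delta<n$. Grouping these two-element orbits as the contribution to $s$ and the self-paired orbit (when $n$ is odd) as the ``$+1$'' term then yields the total count $s+1$. The main obstacle I anticipate is precisely this last bookkeeping step: reconciling the two-element orbit count with the stated description of $s$ as counting odd naturals less than $n$ satisfying the non-coprimality condition, and verifying the formula works uniformly across both parity cases of $n$.
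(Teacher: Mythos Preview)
Your construction of the ``single-orbit'' posets via $\mathrm{Inn}$-orbits on bipartite edges is essentially the paper's construction of the family $<_x$ (indexed by odd $x$, with $<_x=<_{2n-x}$), and your direction argument is a useful addition that the paper leaves implicit. However, there is a genuine conceptual gap, not merely a bookkeeping issue.

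The theorem counts \emph{isomorphism classes} of right continuous posets, and the coprimality condition in $s$ enters only at that stage. The paper shows that for odd $m$ with $\gcd(m,n)=1$ the map $f(k)=mk-2$ is a quandle automorphism of $R_{2n}$ carrying $<_1$ to $<_m$; this $f$ lies in $\mathrm{Aut}(R_{2n})$ but \emph{not} in $\mathrm{Inn}(R_{2n})$ in general. Conversely, for odd $m$ with $\gcd(m,n)>1$ the paper separates $<_m$ from $<_1$ by a poset invariant (connectedness in the sense of Definition~\ref{ConnPoset}). Your argument never leaves $\mathrm{Inn}(R_{2n})$, so it produces the number of \emph{distinct} posets $<_x$, namely $\lceil n/2\rceil$, a quantity that has nothing to do with coprimality. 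Concretely, for $R_{2^k}$ with $k\ge 3$ your count gives $2^{k-2}$ distinct posets, whereas the Corollary immediately following the theorem asserts a unique isomorphism class; your proposed identification of ``the two-element orbits'' with $s$ and ``the self-paired orbit'' with the $+1$ cannot be made to fit this case (indeed there is no self-paired orbit when $n$ is even). To repair the argument you must pass from $\mathrm{Inn}$-orbits on edges to $\mathrm{Aut}$-orbits on the posets $<_x$, which is exactly the step the paper carries out with the maps $f(k)=mk-2$ and the connectedness invariant.
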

\begin{proof}
Let $X=R_{2n}$ be the dihedral quandle with orbits $X_1=\{0,2, \ldots, 2n-2\}$ and $X_2=\{1,3, \ldots ,2n-1\}$.  For every $x \in X_2$, we construct a partial order $<_x$ on $R_{2n}$, such that for all $y \in X$, we have $2y<_x2y-x$ and  $2y <_x 2y+x$.
Then $<_x$ is clearly right continuous partial order since $2y<2y-x$ and $2y<2y+x$ for all $y$ imply that $2z-2y<2z-(2y-x)$.  In other words we obtain   $2y*z<(2y-x)*z$. From the definition of the order $<_x$ it is clear the two partial orders $<_x$  and $<_{2n-x} $ are the same.
  Hence we obtain the following distinct partial orders $<_1, <_3, \ldots  $.  Now we check which ones are isomorphic.  If $m$ is odd and  $gcd(n,m)=1$ then $f(k)=mk-2$ is a bijective function  making $<_1$ and $<_m$ isomorphic.
 %But when  $m$ is odd and  $gcd(m,n)=k>1$ gives $gcd(2m,2n)=2k$.  
 Now let $m$ be odd and  $gcd(m,n)=k>1$.  The two posets $<_1$ and $<_m$ are non isomorphic since $<_1$ is connected poset, as in Definition~\ref{ConnPoset}, and $<_m$ is not connected poset.
 %$<_1$ has one connected component as a poset:since $0<_1 1, 2<_11, 2<_13, 4<_13, $ so on we will have $2(n-1)<_1 1 $  and for partial order  $<_m$   $0<m, 2m<m, 2m<3m,4m<3m,4m<5m,6m<5m$ and so on we will have $2kml=2n=0 $ will be in one connected component and $2, 2+m, 2+2m,.. $form another connected component hence it will divide $X_1$ in $m$ components. \\
 We show that these are the only right continuous posets.  Given a right continuous poset on $R_{2n}$ then $ a < b$ can be written
as $a < a-(a-b)$ which implies that  $a <_x b$ where $x=a-b.$ Now if $a < b$ then by Proposition~\ref{Bipar}, we have $a \in X_1$, $b \in X_2$.  Now let $a = 2\alpha$ and $b = 2\beta+1$ then $a-b = 2(\alpha-\beta)-1 \in X_2$.
%Furthermore, $a * z = 2z - a$ and $b * z = 2z - b$ then $a * z <_x b* z$.  
This ends the proof.

\end{proof} 

 \begin{corollary}

 For the dihedral quandle $R_{2^n}$ with $2^n$ elements,  there is a unique right continuous poset.
 \end{corollary}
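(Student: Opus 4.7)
The plan is a direct application of the theorem that immediately precedes the corollary. That theorem says that the dihedral quandle $R_{2m}$ (of even order $2m$) admits exactly $s+1$ right continuous posets, where $s$ counts the odd natural numbers strictly less than $m$ that share a nontrivial common factor with $m$. So for the statement I would instantiate $2m = 2^n$, that is $m = 2^{n-1}$, and show that $s = 0$.

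First I would write $R_{2^n}$ as the even dihedral quandle whose orbit decomposition is $X_1 = \{0, 2, \ldots, 2^n - 2\}$ and $X_2 = \{1, 3, \ldots, 2^n - 1\}$. Then I would invoke the preceding theorem with $m = 2^{n-1}$, which reduces the problem to counting odd integers $k$ with $1 \le k < 2^{n-1}$ and $\gcd(k, 2^{n-1}) > 1$. The key arithmetic observation is that the only prime dividing $2^{n-1}$ is $2$, so any integer sharing a nontrivial factor with $2^{n-1}$ must be even. Hence no odd $k$ can satisfy $\gcd(k, 2^{n-1}) > 1$, giving $s = 0$, and therefore $s + 1 = 1$ right continuous poset.

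I would then note, for concreteness, that the unique right continuous poset is the one denoted $<_1$ in the proof of the previous theorem, namely the order generated by $2y <_1 2y - 1$ and $2y <_1 2y + 1$ for all $y$; by Proposition~\ref{biComp} this is in fact the complete bipartite poset with vertex sets $X_1$ and $X_2$. The only subtle point, and the part most worth double-checking, is the edge case $n = 1$ (where $R_2$ is a two-element quandle and $m = 1$), but there the count of odd integers less than $1$ is vacuously zero, so $s = 0$ still holds and the statement is trivially true. No step is a real obstacle; the corollary is essentially a numerical consequence of the fact that $2^{n-1}$ has no odd prime factors.
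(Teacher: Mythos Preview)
Your main argument is correct and is exactly the intended proof: instantiate the preceding theorem with $m=2^{n-1}$, observe that the only prime dividing $2^{n-1}$ is $2$ so no odd integer can share a nontrivial factor with it, hence $s=0$ and there is $s+1=1$ right continuous poset. The paper states the corollary without proof precisely because this one-line deduction is all that is needed.

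One small inaccuracy in your concluding aside: for $n\geq 3$ the order $<_1$ is \emph{not} the complete bipartite poset, and Proposition~\ref{biComp} does not say that it is. In $R_8$, for example, $<_1$ relates each even element to only its two odd neighbours $2y\pm 1$, giving eight relations, whereas the complete bipartite poset on $X_1\cup X_2$ would have sixteen. This does not affect the validity of your proof of the corollary, but you should drop that identification.
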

% As seen above there are two right continuous posets on $R_8$ which are isomorphic to each other hence there is only one right continuous poset upto isomorphisim.
 %\vspace*{-65pt}
 
 \vspace*{-7pt}
 \section{Some Computer Calculations}\label{Computations}
 
 In this section we give non-trivial right and left  continuous posets on the finite quandles of order up to $5$ based on Maple and Python computations.  In the following tables we have excluded the trivial and connected quandles.% and also corresponding trivial posets.
\begin{table}[H]
\caption{Continuous posets on quandles of order $3$}
\label{Table2}
\begin{center}
\begin{tabular}{ |c|c|c|} 
    \hline
       Quandle for n = 3 & 
       Right continuous Posets & 
        Left continuous poset \\
    \hline
        \small{
        $\left[ \begin{array}{c}
        0 \;0 \;1  \\
        1 \;1 \;0 \\
        2 \;2\; 2  
        \end{array} \right] $} 
            & 
                ((0,2),(1,2)) 

            & ((0,1))\\ 
     
    \hline
\end{tabular}
\end{center}
\end{table}

 As seen in Table~\ref{Table2} for $n=3$, there exist a unique right continuous poset and a unique left continuous poset.

\begin{table}[H]
\caption{Continuous posets on quandles of order 4}
\label{Table3}
\begin{center}
\begin{tabular}{ |c|c|c|} 
    \hline
        Quandles for n = 4 & 
        Right continuous poset &
        Left continuous poset \\ 
    \hline
        \small{
        $\left[ \begin{array}{c}
        0 \;0 \;0 \;0  \\
        1 \;1 \;1 \;2 \\
        2 \;2 \;2 \;1 \\
        3 \;3 \;3 \;3
        \end{array} \right] $} 
           & \makecell{
                ((0, 3))\;;\\
                ((0, 1), (0, 2), (0, 3))\;;\\
                ((0, 1), (0, 3), (1, 2))\;;\\
                ((0, 1), (0, 2), (1, 3), (2, 3))\;;\\
                 ((2, 3), (1, 3))\;;\\
                 ((2, 3), (1, 3), (0, 3)).\\
                } 
           & ((0,1),(1,2)) and ((1,2)) \\ 
    \hline
        \small{
        $\left[ \begin{array}{c}
         0 \;0 \;0 \;1  \\
        1 \;1\; 1 \;2 \\
        2 \;2 \;2 \;0 \\
         3 \;3 \;3 \;3
        \end{array} \right] $} 
           & \makecell{
                    ((0,3),(1,3),(2,3))\\ 
                            } 
           & ((0,1),(1,2)) and ((1,2)) \\ 
    \hline
        \small{
        $\left[ \begin{array}{c}
        0 \;0 \;1 \;1  \\
        1 \;1 \;0 \;0 \\
        2 \;2 \;2 \;2 \\
        3 \;3 \;3 \;3
        \end{array} \right] $} 
            & \makecell{
                ((0,2),(1,2),(0,3),(1,3),(2,3))\;; \\
                ((0,2),(1,2),(0,3),(1,3)) \;;\\
                ((0,2),(1,2)) \;;\\
                ((2,3)).\\
                } 
            & 
            ((0,1),(2,3)) and ((2,3))\\

    \hline
        \small{
        $\left[ \begin{array}{c}
         0 \;0 \;0 \;0  \\
        1 \;1 \;3 \;2 \\
        2 \;3 \;2 \;1 \\
        3 \;2 \;1 \;3
        \end{array} \right] $} 
            & \makecell{
                ((0,1),(0,2),(0,3)) \\
        } 
            & None \\ 
    \hline
        \small{
        $\left[ \begin{array}{c}
        0 \;0 \;1 \;1  \\
        1 \;1 \;0 \;0 \\
        3 \;3 \;2 \;2 \\
        2 \;2 \;3 \;3
        \end{array} \right] $} 
            & \makecell{
                ((0,2),(0,3),(1,2),(1,3)) \\
            } 
            & ((0,1),(2,3)) \\ 
    
    \hline
\end{tabular}
\end{center}
\end{table}

 \begin{table}[H]
\caption{Continuous posets on quandles of order 5, Part I}
\label{Table4}
\begin{center}
\begin{tabular}{ |c|c|c|} 
    \hline
       Quandles for  n = 5& 
        Right continuous& 
        Left continuous 
 \\
       
    \hline
        
        \small{
        $\left[ \begin{array}{c}
        0\; 0\; 0\; 0\; 0 \\
        1\; 1\; 1\; 1\; 1 \\
        2\; 2\; 2\; 2\; 3 \\ 
        3\; 3\; 3\; 3\; 2 \\
        4\; 4\; 4\; 4\; 4
        \end{array} \right] $} 
            & \makecell{\\
            ((0,1),(1,2),(1,3),(0,4))\;;\\
            ((0,2),(0,3),(1,2),(1,3),(4,2),(4,3))\;;\\
            ((0,2),(0,3),(1,2),(1,3),(2,4),(3,4))\;;\\
            ((0,1),(1,4),(4,2),(4,3)).\\
           \\
           }  &
            \makecell{ ((0,1),(1,2),(2,3))\;;\\
            ((0,1),(1,2))\;;\\
            ((1,2)).}
            \\ 
    \hline
        \small{
        $\left[ \begin{array}{c}
        0\; 0\; 0\; 0\; 0 \\
        1\; 1\; 1\; 1\; 2 \\
        2\; 2\; 2\; 2\; 3 \\ 
        3\; 3\; 3\; 3\; 1 \\
        4\; 4\; 4\; 4\; 4
        \end{array} \right] $} 
            & \makecell{
            ((0,1), (0,2), (0,3), (2,4), (3,4), (1,4))\;; \\
((0,4)) \;;\\
((0,1), (0,2), (0,3))\;; \\
(( 0,4 ),(4,1 ),(4,2 ),(4,3)). 
}	& \makecell{\\((0,1), (1,2), (2,3)) \;;\\
((0,1), (1,2)) \;;\\
((2,3)).
\\
}
  \\ 
    \hline
        \small{
        $\left[ \begin{array}{c}
        0\; 0\; 0\; 0\; 1 \\
        1\; 1\; 1\; 1\; 0 \\
        2\; 2\; 2\; 2\; 3 \\ 
        3\; 3\; 3\; 3\; 2 \\
        4\; 4\; 4\; 4\; 4
        \end{array} \right] $} 
            & \makecell{\\
            ((1,2),(0,3),(2,4),(3,4))\;;\\
((1,2),(0,2),(1,3),(0,3),(2,4),(3,4))\;;\\
((1,4),(0,4))\;;\\
((1,2),(0,2),(1,3),(0,3)).\\
\\}
	& \makecell{ ((1,2),(0,1),(2,3))\;; \\ 
	((0,1),(0,2))\;;
	\\ ((0,2),(1,2)).\\
	\\}\\
	
    \hline
        \small{
        $\left[ \begin{array}{c}
        0\; 0\; 0\; 0\; 1 \\
        1\; 1\; 1\; 1\; 2 \\
        2\; 2\; 2\; 2\; 3 \\ 
        3\; 3\; 3\; 3\; 0 \\
        4\; 4\; 4\; 4\; 4
        \end{array} \right] $} 
            & \makecell{((0,4),(1,4),(2,4),(3,4)).}	& \makecell{\\
            ((1,2),(0,1),(2,3))\;; \\ 
	((0,1),(0,2))\;;
	\\ ((0,2),(1,2)).\\
	\\
\\	}		\\ 
    \hline
        \small{
        $\left[ 
        \begin{array}{c}
        0\; 0\; 0\; 0\; 0 \\
        1\; 1\; 1\; 1\; 1 \\
        2\; 2\; 2\; 4\; 3 \\ 
        3\; 3\; 4\; 3\; 2 \\
        4\; 4\; 3\; 2\; 4
        \end{array} \right] $} &\makecell{\\
        ((0,2),(0,3),(0,4))\;;\\
        ((0,2),(0,3),(0,4),(1,2),(1,3),(1,4))\;;\\
        ((0,1))\;;\\
        ((0,2),(0,3),(0,4),(0,1),(1,2),(1,3),(1,4)).\\
       \\
       \\
       }
            & \makecell{((0,1),(0,2),(0,3))\;;\\
            ((0,1),(0,2))\;;\\
            ((0,1)). }  \\ 
    \hline
        \small{
        $\left[ \begin{array}{c}
        0\; 0\; 0\; 0\; 0 \\
        1\; 1\; 1\; 2\; 2 \\
        2\; 2\; 2\; 1\; 1 \\ 
        3\; 3\; 3\; 3\; 3 \\
        4\; 4\; 4\; 4\; 4
        \end{array} \right] $} 
            & \makecell{\\
            ((1,3),(2,3))\;;\\
            ((1,3),(2,3),(1,4),(2,4))\;;\\
            ((0,4))\;;\\
            ((3,2),(3,1))\;;\\
            ((1,3),(2,3),(4,1),(4,2)).\\
            \\
              }
            & \makecell{((0,1),(1,2),(3,4))\;; \\ 
            ((0,1),(1,2))\;;\\
            ((0,1),(3,4))\;;\\
            ((0,1)).\\}
            \\
            
    \hline
        \small{
        $\left[ \begin{array}{c}
        0\; 0\; 0\; 0\; 0  \\
        1\; 1\; 1\; 2\; 2 \\
        2\; 2\; 2\; 1\; 1 \\ 
        3\; 4\; 4\; 3\; 3 \\
        4\; 3\; 3\; 4\; 4
        \end{array} \right] $} 
            & \makecell{((0,1),(0,2),(0,3),(0,4))\;;\\
            ((0,1),(0,2))\;;\\
            ((0,1),(0,2),(2,3),(2,4),(1,3),(1,4))\;;\\
            ((1,3),(1,4),(2,3),(2,4)).} & \makecell{((0,1),(1,2),(3,4))\;; \\ 
            ((0,1),(1,2))\;;\\
            ((0,1),(3,4))\;;\\
            ((0,1)).\\}  \\ 
    \hline
\end{tabular}
\end{center}
\end{table} 

 \begin{table}[H]
\caption{Continuous posets on quandles of order 5, Part II}
\label{Table1}
\begin{center}
\begin{tabular}{ |c|c|c|} 
    \hline
        Quandles for n = 5 & 
        Right continuous & 
        Left continuous 
                 \\ 
    \hline
        \small{
        $\left[ \begin{array}{c}
        0\; 0\; 0\; 1\; 1 \\
        1\; 1\; 1\; 0\; 0 \\
        2\; 2\; 2\; 2\; 2 \\ 
        3\; 3\; 4\; 3\; 3 \\
        4\; 4\; 3\; 4\; 4
        \end{array} \right] $} 
            & \makecell{
((0,2), (1,2) (2,3),(2,4))\;;\\
((0,2), (1,2))\;;\\
((2,3 ),(2,4))\;;\\
((0,3),(0,4),(1,3),(1,4)).} 
& \makecell {
((0,1 ),(1,2 ),(3, 4))\;;\\
((3,4))\;;\\
((0,1),(1,2)).\\
}
\\
 
    \hline
        \small{
        $\left[ \begin{array}{c}
        0\; 0\; 0\; 1\; 1 \\
        1\; 1\; 1\; 2\; 2 \\
        2\; 2\; 2\; 0\; 0 \\ 
        3\; 3\; 3\; 3\; 3 \\
        4\; 4\; 4\; 4\; 4
        \end{array} \right] $} 
            &
            \makecell{
 ((0, 3),(1,3), (2,3), (3,4))\;;\\
((0,3),(1,3),(2, 3))\;;\\
((3,4)).} & \makecell{
            ((0,1),(1,2),(3,4))\;;\\
((3,4))\;;\\
((0,1),(1,2)).} 

   \\

    \hline
        \small{
        $\left[ \begin{array}{c}
        0\; 0\; 0\; 1\; 2 \\
        1\; 1\; 1\; 2\; 0 \\
        2\; 2\; 2\; 0\; 1 \\ 
        3\; 3\; 3\; 3\; 3 \\
        4\; 4\; 4\; 4\; 4
        \end{array} \right] $} 
            &  	 
            \makecell{
            ((0,3),(1,3),(2,3),(0,4),(1,4),(2,4))\;;\\
            ((0,3),(1,3),(2,3))\;;\\
            ((3,4)).} & \makecell{
            ((0,1),(1,2))\;;\\
            ((0,1)).} \\
            
    \hline
        \small{
        $\left[ \begin{array}{c}
        0\; 0\; 0\; 0\; 0\\
        1\; 1\; 1\; 1\; 1\\
        2\; 2\; 2\; 2\; 2\\
        4\; 4\; 4\; 3\; 3\\
        3\; 3\; 3\; 4\; 4
        \end{array} \right] $}
            & \makecell{\\
            ((0,1),(0,2))\;;\\
            ((0,1),(1,3),(1,4))\;;\\
            (0,1),(1,2),(2,3),(2,4)).\\
           \\
           \\
           }	&
             \makecell{\\
             ((0,1),(0,2))\;;\\
            ((0,1),(1,2),(3,4))\;;\\
            ((0,1),(3,4))\;;\\ 
            ((3,4)).\\
            }\\
    \hline
        \small{
        $\left[ \begin{array}{c}
        0\; 0\; 0\; 0\; 0\\
        1\; 1\; 1\; 2\; 2\\
        2\; 2\; 2\; 1\; 1\\
        4\; 4\; 4\; 3\; 3\\
        3\; 3\; 3\; 4\; 4
        \end{array} \right] $} 
            & \makecell{((0,1),(0,2))\;;\\
            ((0,1),(1,3),(1,4))\;;\\
            ((1,3),(1,4),(2,3),(2,4)).}
            & \makecell{\\
            ((0,1),(0,2))\;;\\
            ((0,1),(1,2),(3,4))\;;\\
            ((0,1),(3,4))\;;\\ 
            ((3,4)).\\
            \\
            }\\ 
    \hline
        \small{
        $\left[ \begin{array}{c}
        0\; 0\; 0\; 1\; 1\\
        1\; 1\; 1\; 2\; 2\\
        2\; 2\; 2\; 0\; 0\\
        4\; 4\; 4\; 3\; 3\\
        3\; 3\; 3\; 4\; 4
        \end{array} \right] $} 
            & \makecell{((0,4),(1,4),(2,4),(0,3),(1,3),(2,3)).}
            	& \makecell{\\
            	((0,1),(0,2))\;;\\
            ((0,1),(1,2),(3,4))\;;\\
            ((0,1),(3,4))\;;\\ 
            ((3,4)).\\
            \\
            } \\

    \hline
        \small{
        $\left[ \begin{array}{c}
        0\; 0\; 0\; 0\; 0\\
        1\; 1\; 4\; 2\; 3\\
        2\; 3\; 2\; 4\; 1\\
        3\; 4\; 1\; 3\; 2\\
        4\; 2\; 3\; 1\; 4
        \end{array} \right] $} 
            & \makecell{\\
            \\
            ((0,1),(0,2),(0,3),(0,4)).\\
            \\
            \\
            } & None \\ 
    \hline
        \small{
        $\left[ \begin{array}{c}
        0\; 0\; 1\; 1\; 1 \\
        1\; 1\; 0\; 0\; 0 \\
        2\; 2\; 2\; 2\; 3 \\
        3\; 3\; 3\; 3\; 2 \\
        4\; 4\; 4\; 4\; 4
        \end{array} \right] $} 
            & \makecell{\\
((1,2),(0,2),(1,3),(0,3),(2,4),(3,4))\;;\\
((1,4),(0,4))\;;\\
((1,2),(0,2),(1,3),(0,3)).
\\
\\
\\
} & \makecell{((0,1),(2,3))\;; \\
((0,1)).}\\

    \hline
\end{tabular}
\end{center}
\end{table} 

 \begin{table}[H]
\caption{Continuous posets on quandles of order 5, Part III}
\label{Table1}
\begin{center}
\begin{tabular}{ |c|c|c|} 
    \hline
       Quandles for  n = 5 & 
        Right continuous & 
        Left continuous 
         \\ 
    \hline
        \small{
        $\left[ \begin{array}{c}
       0\; 0\; 1\; 1\; 1\\
1\; 1\; 0\; 0\; 0\\
2\; 2\; 2\; 4\; 3\\
3\; 3\; 4\; 3\; 2\\
4\; 4\; 3\; 2\; 4
        \end{array} \right] $} 
            & \makecell{\\
            \\
             ((0,2),(0,3),(0,4),(1,2),(1,3),(1,4)).\\
                               \\
                               \\
                               \\}

            & \makecell{((0,1),(2,3))\;; \\
((0,1)).}
	 \\ 
    \hline
        \small{
        $\left[ \begin{array}{c}
       0\; 0\; 1\; 1\; 1\\
1\; 1\; 0\; 0\; 0\\
2\; 2\; 2\; 2\; 2\\
4\; 4\; 4\; 3\; 3\\
3\; 3\; 3\; 4\; 4
        \end{array} \right] $} 
            & \makecell{\\
            ((0,2),(0,3),(0,4),(1,2),(1,3),(1,4))\;;\\
            ((0,2),(1,2))\;;\\
            ((2,3),(2,4))\;;\\
            ((0,3),(0,4),(1,3),(1,4)).\\
            \\
            }
            & \makecell{((0,1),(2,3)) \;;\\
((0,1)).}\\
	  
    \hline
        \small{
        $\left[ \begin{array}{c}
       0\; 0\; 1\; 1\; 1\\
1\; 1\; 0\; 0\; 0\\
3\; 4\; 2\; 4\; 3\\
4\; 2\; 4\; 3\; 2\\
2\; 3\; 3\; 2\; 4
        \end{array} \right] $} 
            &  \makecell{\\
            \\
             ((0,2),(0,3),(0,4),(1,2),(1,3),(1,4)).\\
                               \\
                               \\
                               \\}	& None \\ 
    \hline
        
\end{tabular}

\end{center}
\end{table} 
 \section*{Acknowledgement} 
Mohamed Elhamdadi was partially supported by Simons Foundation collaboration grant 712462.

%%%%%%%%%%%%%%%%%%%%%%
%%%%%%%%%%%%%%%%%%%%%
%%%%%%%%%%%%%%%%%%%%%%%
%%%%%%%%%%%%%%%%%%%%%%%%
\end{document}